\newcommand{\bs}{\boldsymbol}
\newcommand{\adb}{\allowdisplaybreaks}
\newcommand{\inv}{\frac{1}}
\newtheorem{thm}{Theorem}
\date{\today}
\title{Convergence rate to a lower tail dependence coefficient of a skew-$t$ distribution}
\author{Thomas Fung$^{a,}\footnote{Honorary Associate, University of Sydney.}$\,\, and Eugene Seneta$^{b,}$\footnote{Corresponding Author. Fax: + 61 2 93514534. Email address:  {\tt eseneta@maths.usyd.edu.au} (Eugene Seneta).} \\
{\small $^a$ Department of Statistics, Macquarie University, NSW 2109, Australia}\\
{\small $^b$ School of Mathematics and Statistics, University of Sydney, N.S.W. 2006, Australia}\\
}
\begin{document}
\maketitle
%\
%%Joanna J.J. Wang\footnotemark[1] and Jennifer S.K. Chan
%\end{center}
%\footnotetext[1]{{\it Address for correspondence:} School of Mathematics and Statistics, The University of Sydney, NSW 2006, Australia. E-mail: %jwan4640@mail.usyd.edu.au} \par
%\begin{center}  {\it The University of Sydney} \end{center}

\begin{abstract}
We examine the rate of decay to the limit of the tail dependence coefficient of a bivariate skew $t$ distribution which always displays asymptotic tail dependence. It contains as a special case the usual bivariate symmetric $t$ distribution,  and hence is an appropriate (skew) extension. The rate is asymptotically power-law.  The second-order structure of the univariate quantile function for such a skew-$t$ distribution is a central  issue.

\noindent{\it Keywords}: 
Bivariate skew-$t$ distribution, lower tail dependence coefficient, quantile function, convergence rate.  
\end{abstract}

 %\linenumbers\

%i.e. what is the value of $\theta$ under this model and if possible, when will $\theta$ be just greater than 1 to suggest possible clustering.
%As a side note, for asymptotic tail dependence we are requiring $C(u,u) \sim \lambda_L u$.
\section{Background and Motivation}
The coefficient of lower tail dependence of a random vector $\textbf{X} =
(X_1,X_2)^T$ with marginal inverse distribution function $F_1^{-1}$ and
$F_2^{-1}$ is defined as
\begin{equation}
\lambda_L = \lim_{u \rightarrow 0^+}\lambda_L(u), \quad \text{where} \quad \lambda_L(u) = P(X_1 \leq F_1^{-1}(u) | X_2
\leq F_2^{-1}(u)).\label{defn:tail dependence}
\end{equation}
$\textbf{X}$ is said to have asymptotic lower tail dependence if $\lambda_L$
exists positive. If $\lambda_L=0$, then $\textbf{X}$ is said to be
asymptotically independent in the lower tail. This quantity provides insight on
the tendency for the distribution to generate joint extreme event since it
measures the strength of dependence (or association) in the lower tails of a
bivariate distribution.  If the marginal distributions of these random variables are
continuous, then from (\ref{defn:tail dependence}), it follows that
$\lambda_L(u)$ can be expressed in terms of  the copula of
$\textbf{X}$, $C(u_1,u_2)$, as
\begin{equation}
\lambda_L(u) = \frac{P(X_1\leq F_1^{-1}(u), X_2\leq F_2^{-1}(u))}{P(X_2 \leq F_2^{-1}(u))} =  \frac{C(u,u)}{u}.
\label{defn:tail dependence 3}
\end{equation}

%Fung and Seneta (2011) argue that substantial lower tail dependence may be present even when this limit is zero %reflected the  behaviour of $
%{C(u,u)}/{u}$ as a regularly varying function  of positive index as
%$u \to 0^+$. This was demonstrated in the simplest case of all, the bivariate normal.
%However, similar to the work of Ledford and Tawn (1997), the approach in Fung and Seneta (2011) cannot accommodate %models which display asymptotic dependence, i.e. $\lambda_L = \lim_{u\to 0^+} C(u,u)/u >0,$ like the skew $t$ %distribution defined in (\ref{defn:azzalini skew t}). As a result, a more general approach is needed. We propose a %unified approach by considering
%\begin{equation}
%\lambda_L(u) - \lambda_L = u^{\theta}L(u), \label{general approach}
%\end{equation}
%where $L(u)$ is a slowly varying function as $u\to 0^+$ to account for the cases that $\lambda_L>0.$
%Our approach and the results that follow also include the popular symmetric $t$ distribution.

In this paper we  investigate  the rate of convergence to 0 of $\lambda_L(u)-\lambda_L$ as $u\to 0^+$,  in an important case 
when $\lambda_L >0.$ Heffernan (2000) provides a summary of coefficients for many commonly employed bivariate distributions, but the specific situation which  we study is not considered.

In the sequel we refer to the  bivariate skew-$t$  as that distribution resulting  from variance-mixing of the bivariate skew
normal, $\textbf{Z}\sim SN_2(\bs{\theta},R)$ (see Azzalini and Dalla Valle
(1996)), inversely with a gamma random variable $V \sim
\Gamma(\frac{\eta}{2},\frac{\eta}{2})$, with $\eta>0$:
\begin{equation}
\textbf{X} = V^{-\inv{2}}\textbf{Z}, \label{defn:azzalini skew t}
\end{equation}
where $\textbf{Z}$ is independently distributed of $V$.
 
This skew distribution was originally introduced in multivariate form in Branco and Dey (2001) and studied extensively in Azzalini and Capitanio (2003). Some recent reviews on this area of study can be found in Azzalini and Genton (2008), Azzalini and Capitanio (2010) and in the book edited by Genton (2004). 

The bivariate skew-$t$ always satisfies $\lambda_L >0$ (See Fung and Seneta (2010)). This was also considered in Bortot (2010) and Padoan (2011) with an approach initiated by Cheng and Genton (2007) which is quite different from that of Fung and Seneta (2010). The  case $\theta_1 = \theta_2 = 0$ reduces to the symmetric bivariate $t$ distribution. In this sense, the bivariate skew-$t$ distribution defined by (\ref{defn:azzalini skew t}) is a  more  appropriate generalisation of the symmetric case.

The motivation for our investigation of the rate of convergence in the present specific case of bivariate skew-$t$ arose from the following. Ramos and Ledford (2009), continuing the work of Ledford and Tawn (1997), studied intensively a family of bivariate distributions (which they characterised) which satisfied in particular the condition
\begin{equation}
\lambda_L(u) = u^{\inv{\alpha}-1} L(u). \label{RL condition}
\end{equation}
Here $L(u)$ is a slowly varying function as $u\to 0^+$, and $\alpha\in (0,1]$, so that, in fact, the value of $\alpha$ could be used for comparison of the degree of tail dependence structure between members of the family. The standard bivariate extreme value models correspond to $\alpha=1$.

Expression (\ref{RL condition}) may also be regarded as the rate of convergence to $\lambda_L(u)$ when $ \lambda_L =0$, but when $\lambda_L(u) \to \lambda_L> 0$, which is also covered by (\ref{RL condition})   with $\alpha =1$ and $L(u) \to \lambda_L$, the rate of convergence  is more appropriately studied by considering the rate of convergence to $0$ as $u\to 0+$ of  $$| \lambda_L(u) - \lambda_L|.$$  Our study of an important special case is an early step in this direction.

\section{The Bivariate Skew-{\it t} Distribution}

From Branco and Dey (2001), the random vector $\textbf{X}$, defined by (\ref{defn:azzalini skew t}), has
probability density:
\begin{equation*}
f_{\textbf{X}}(\textbf{x}) =
\frac{2\Gamma(\frac{\eta+2}{2}) \left(1+\frac{\textbf{x}^TR^{-1}\textbf{x}}{\eta}\right)^{-\frac{\eta+2}{2}}}{\pi\eta\Gamma(\frac{\eta}{2})\sqrt{1-\rho^2}}F_{t_{\eta+2}}\left(\bs{\theta}^T\textbf{x}\sqrt{\frac{\eta+2}{\eta+\textbf{x}^TR^{-1}\textbf{x}}}\right),
\end{equation*}
where $F_{t_{\eta+2}}(\cdot)$ is the distribution function of the (symmetric)
$t$ distribution with $\eta+2$ degrees of freedom, $R = \left(\begin{smallmatrix} 1 & & \rho \\ & & \\\rho & &
1\end{smallmatrix} \right)$, and $\bs{\theta} = (\theta_1, \theta_2)^T$ is a vector
that controls the asymmetry of the distribution.

%A random vector $\textbf{Z}$ is said to have a bivariate skew normal
%distribution, if it has density
%\begin{equation}
%f(\textbf{z}) = 2\phi_2(\text{z},R)\Phi(\bs{\theta}^T\textbf{z}), \quad
%\textbf{z}\in\R^2 \label{defn:skew normal}
%\end{equation}
%where $\phi_2(\cdot,R)$ is the bivariate normal density with mean $\textbf{0}$
%and correlation matrix $R = \left(\begin{smallmatrix} 1 & & \rho \\ & & \\\rho & &
%1\end{smallmatrix} \right)$, and $\bs{\theta} = (\theta_1, \theta_2)^T$ is a vector
%that controls the asymmetry of the distribution. A random variable $V$ is said to have a gamma
%distribution, denoted as $\Gamma(\frac{\eta}{2},\frac{\eta}{2})$ with $\eta>0$ if it has density
%\begin{eqnarray}
%f(y)  &=& \frac{(\frac{\eta}{2})^{\frac{\eta}{2}}}{\Gamma(\frac{\eta}{2})}
%y^{\frac{\eta}{2}-1}e^{-\frac{\eta}{2}y}, \quad y>0;\quad
%= 0, \quad \text{otherwise.} \label{density:gamma}
%\end{eqnarray}
%
%Using (\ref{defn:azzalini skew
%t}) with (\ref{defn:skew normal}) and (\ref{density:gamma}), a random vector $\textbf{X}$ is said to have a skew $t$ distribution if it has

%\label{density:azzalini skew t}

The marginal density of $X_1$ can then be found as
\begin{equation*}
f_{X_1}(x) = 2f_{t_{\eta}}(x)F_{t_{\eta+1}}(\lambda_1x\sqrt{\frac{\eta+1}{\eta+x^2}}),
%\label{density:azzalini skew t2}
\end{equation*}
where 
\begin{equation}
f_{t_{\eta}}(x) = \frac{\Gamma(\frac{\eta+1}{2})}{(\pi\eta)^{\inv{2}}\Gamma(\frac{\eta}{2})}\left(1+\frac{x^2}{\eta}\right)^{-\frac{\eta+1}{2}} \label{symmetric t density}
\end{equation}
is the density of the (symmetric) $t$ distribution with $\eta$ degrees of freedom and 
$\lambda_1 = (\theta_1+\rho\theta_2)/\sqrt{1+\theta_2^2(1-\rho^2)}$. 
%\begin{equation}
%\lambda_1 = \frac{\theta_1+\rho\theta_2}{\sqrt{1+\theta_2^2(1-\rho^2)}}
%\label{lambda_1}.
%\end{equation}
$X_2$ has a similar marginal density, except its marginal skewness
parameter, $\lambda_2,$ takes the form of $\lambda_2 = (\theta_2+\rho\theta_1)/\sqrt{1+\theta_1^2(1-\rho^2)}.$
%\begin{equation}
%\lambda_2 = \frac{\theta_2+\rho\theta_1}{\sqrt{1+\theta_1^2(1-\rho^2)}}.
%\label{lambda_2}
%\end{equation}

From (\ref{defn:tail dependence 3}) and using some basic properties of copulas, it can be shown that 
\begin{align}
\notag &\lambda_L = \lim_{u \rightarrow 0^+}\lambda_L(u) \\
 = &\lim_{y\rightarrow -\infty}\bigl[ P(X_2\leq F_2^{-1}(F_1(y))|X_1= y) +P(X_1\leq F_1^{-1}(F_2(y))|X_2 = y)\bigr]. \label{temp1}
\end{align}
Fung and Seneta (2010) showed that if $\textbf{X} = (X_1, X_2)^T$ is a random vector defined by (\ref{defn:azzalini skew t}), then \begin{align}
\notag  &\lim_{y\rightarrow -\infty} P(X_2\leq F_2^{-1}(F_1(y))|X_1= y)\\
=& \int^{-a_{2.1}}_{-\infty} f_{t_{\eta+1}}(z) \frac{F_{t_{\eta+2}}
\label{first half:azzalini skew t} \left(
\Bigl(\theta_2\sqrt{\frac{(1-\rho^2)}{\eta+1}} z -(\theta_1+\rho\theta_2)
\Bigr)
\sqrt{\frac{\eta+2}{1+\frac{z^2}{\eta+1}}}\right)}{F_{t_{\eta+1}}(-\lambda_1
\sqrt{\eta+1})}\,dz;\\
\notag \text{and } \quad & \lim_{y\rightarrow -\infty} P(X_1\leq F_1^{-1}(F_2(y))|X_2 = y) \\
=& \int^{-a_{1.2}}_{-\infty} \frac{f_{t_{\eta+1}}(z) F_{t_{\eta+2}} \left( \Bigl(\theta_1
\sqrt{\frac{(1-\rho^2)}{\eta+1}} z -(\theta_2+\rho\theta_1) \Bigr)
\sqrt{\frac{\eta+2}{1+\frac{z^2}{\eta+1}}}\right)}{F_{t_{\eta+1}}(-\lambda_2
\sqrt{\eta+1})}\,dz; \label{second half:azzalini skew t}
\end{align}
where 
$a_{2.1}=\bigl((\frac{F_{t_{\eta+1}}(-\lambda_2\sqrt{\eta+1})}{F_{t_{\eta+1}}(-\lambda_1\sqrt{\eta+1})})^{\inv{\eta}}-\rho\bigr)\sqrt{\frac{\eta+1}{1-\rho^2}}$, and  $a_{1.2}
=\bigl((\frac{F_{t_{\eta+1}}(-\lambda_1\sqrt{\eta+1})}{F_{t_{\eta+1}}(-\lambda_2\sqrt{\eta+1})})^{\inv{\eta}}-\rho\bigr)\sqrt{\frac{\eta+1}{1-\rho^2}}.$

%\notag
%&=& \int^{-\left\{\left(\frac{F_{t_{\eta+1}}(-\lambda_2\sqrt{\eta+1})}{F_{t_{\eta+1}}(-\lambda_1\sqrt{\eta+1})}\right)^{\inv{\eta}}-\rho\right\}\sqrt{\frac{\eta+1}{1-\rho^2}}}_{-\infty} f_{t_{\eta+1}}(z,0,1) \frac{F_{t_{\eta+2}} \label{first half:azzalini skew t}
%\left( \Biggl(\theta_2\sqrt{\frac{(1-\rho^2)}{\eta+1}} z
%-(\theta_1+\rho\theta_2) \Biggr)
%\sqrt{\frac{\eta+2}{1+\frac{z^2}{\eta+1}}}\right)}{F_{t_{\eta+1}}(-\lambda_1
%\sqrt{\eta+1})}\,dz,
%\end{align} where
%\begin{equation*}
%f_{t_{\eta+1}}(z,0, 1)  =
%\frac{\Gamma(\frac{\eta+2}{2})(1+\frac{z^2}{\eta+1})^{-\frac{\eta+2}{2}}}{\sqrt{\pi(\eta+1)}\Gamma(\frac{\eta+1}{2})}.\end{equation*}
%Thus: {\adb
%\begin{align}
%\end{align}
%where
%%\notag &\text{where } \quad 
%%\end{align}
%Consequently:\begin{equation*}
%\end{equation*} is the sum of (\ref{first half:azzalini skew t}) and (\ref{second half:azzalini skew t})

We shall  show that
\begin{equation}
|\lambda_L(u) - \lambda_L| = \left| \frac{C(u,u)}{u} - \lambda_L\right| \sim {\mbox{Const.}} u^{\frac{2}{\eta}}, \label{general approach}
\end{equation}
 as $u\to 0^+$.

The rest of this paper is set out as follows. In Section 3, we derive an accurate lower quantile result for the skew $t$ distribution defined in (\ref{defn:azzalini skew t}).
In Section 4, we derive the rate of convergence in the form of (\ref{general approach}) for the skew-$t$ distribution.

\section{Lower Quantile results}
In our subsequent theoretical development, both the asymptotic behaviour of $F_i(y)$ and its inverse $F_i^{-1}(y)$ as $y\to -\infty$ with higher order terms are needed. We begin by discussing the behaviour of $F_i(y)$ as $y\to -\infty$. Without loss of generality, set $i = 1.$ The result is summarised into the following theorem, only the first term of which  is given  in eqn.(28)  of Fung and Seneta (2010).

\begin{thm} The asymptotic behaviour of the marginal distribution function of $X_1$ is
\begin{align}
F_1(y) &=P(X_1\leq y) = c_1|y|^{-\eta}(1+d_1y^{-2}+O(y^{-4})),\quad   \text{as $y\to -\infty$}, \label{asym F_1}\\
%\notag &=& \frac{2\Gamma(\frac{\eta+1}{2})\eta^{\frac{\eta+1}{2}}}{(\pi\eta)^{\inv{2}}\Gamma(\frac{\eta}{2})}F_{t_{\eta+1}}(-\lambda_1\sqrt{\eta+1})\frac{|y|^{-\eta}}{\eta}\\
%\notag && \quad \times \left(1+ (\frac{\eta(\eta+1)}{\eta+2}-\frac{\eta(\eta+1)}{2}+ \frac{\eta^2 f_{t_{\eta+1}}(-\lambda_1\sqrt{\eta+1})\lambda_1\sqrt{\eta+1}}{2(\eta+2)F_{t_{\eta+1}}(-\lambda_1\sqrt{\eta+1})})\inv{y^2} +O(\inv{y^4})\right)\\
%\notag &=& \frac{2\Gamma(\frac{\eta+1}{2})\eta^{\frac{\eta+1}{2}}}{(\pi\eta)^{\inv{2}}\Gamma(\frac{\eta}{2})}F_{t_{\eta+1}}(-\lambda_1\sqrt{\eta+1})\frac{|y|^{-\eta}}{\eta}\\
%\notag && \quad \times \left(1+ ( - \frac{\eta^2(\eta+1)}{2(\eta+2)}+ \frac{\eta^2 f_{t_{\eta+1}}(-\lambda_1\sqrt{\eta+1})\lambda_1\sqrt{\eta+1}}{2(\eta+2)F_{t_{\eta+1}}(-\lambda_1\sqrt{\eta+1})})\inv{y^2} +O(\inv{y^4})\right) \\
\notag \text{where}\quad c_1 &= \frac{2\Gamma(\frac{\eta+1}{2})\eta^{\frac{\eta+1}{2}}}{(\pi\eta)^{\inv{2}}\Gamma(\frac{\eta}{2})}\frac{F_{t_{\eta+1}}(-\lambda_1\sqrt{\eta+1})}{\eta},\\
\notag d_1 &=  - \frac{\eta^2(\eta+1)}{2(\eta+2)}+ \frac{\eta^2 f_{t_{\eta+1}}(-\lambda_1\sqrt{\eta+1})\lambda_1\sqrt{\eta+1}}{2(\eta+2)F_{t_{\eta+1}}(-\lambda_1\sqrt{\eta+1})}.
\end{align}
\end{thm}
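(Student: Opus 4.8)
The plan is to read the expansion off directly from the marginal density
$f_{X_1}(x)=2f_{t_{\eta}}(x)\,F_{t_{\eta+1}}\bigl(\lambda_1 x\sqrt{(\eta+1)/(\eta+x^2)}\bigr)$, by expanding each of its two factors as $x\to-\infty$ and then integrating over $(-\infty,y]$, since $F_1(y)=\int_{-\infty}^{y}f_{X_1}(x)\,dx$. For the symmetric-$t$ factor, (\ref{symmetric t density}) gives, for $x<-\sqrt{\eta}$,
$\bigl(1+x^2/\eta\bigr)^{-(\eta+1)/2}=\eta^{(\eta+1)/2}|x|^{-(\eta+1)}\bigl(1+\eta/x^2\bigr)^{-(\eta+1)/2}=\eta^{(\eta+1)/2}|x|^{-(\eta+1)}\bigl(1-\tfrac{\eta(\eta+1)}{2}x^{-2}+O(x^{-4})\bigr)$, the binomial series converging for $|x|>\sqrt{\eta}$ with a remainder that is uniformly $O(x^{-4})$ on every half-line $(-\infty,-M]$, $M>\sqrt\eta$. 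For the skew factor, writing $x=-t$ one has $\lambda_1 x\sqrt{(\eta+1)/(\eta+x^2)}=-\lambda_1\sqrt{\eta+1}\,(1+\eta/t^2)^{-1/2}=-\lambda_1\sqrt{\eta+1}+\tfrac{\eta\lambda_1\sqrt{\eta+1}}{2}x^{-2}+O(x^{-4})$, so that, $F_{t_{\eta+1}}$ being smooth near $-\lambda_1\sqrt{\eta+1}$, a first-order Taylor expansion yields $F_{t_{\eta+1}}\bigl(\cdots\bigr)=F_{t_{\eta+1}}(-\lambda_1\sqrt{\eta+1})+f_{t_{\eta+1}}(-\lambda_1\sqrt{\eta+1})\tfrac{\eta\lambda_1\sqrt{\eta+1}}{2}x^{-2}+O(x^{-4})$, the quadratic Taylor term and the error in the argument both being absorbed into the $O(x^{-4})$.

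Multiplying the two expansions gives $f_{X_1}(x)=A\,|x|^{-(\eta+1)}\bigl[F+Bx^{-2}+O(x^{-4})\bigr]$ as $x\to-\infty$, where $A=2\Gamma(\tfrac{\eta+1}{2})\eta^{(\eta+1)/2}/\bigl((\pi\eta)^{1/2}\Gamma(\tfrac{\eta}{2})\bigr)$, $F=F_{t_{\eta+1}}(-\lambda_1\sqrt{\eta+1})$, and $B=-\tfrac{\eta(\eta+1)}{2}F+\tfrac{\eta}{2}\lambda_1\sqrt{\eta+1}\,f_{t_{\eta+1}}(-\lambda_1\sqrt{\eta+1})$. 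I would then integrate termwise, using $\int_{-\infty}^{y}|x|^{-(\eta+1)}\,dx=|y|^{-\eta}/\eta$ and $\int_{-\infty}^{y}|x|^{-(\eta+3)}\,dx=|y|^{-(\eta+2)}/(\eta+2)$, and bound the tail contribution of the remainder by $\int_{-\infty}^{y}O(|x|^{-(\eta+5)})\,dx=O(|y|^{-(\eta+4)})$. Factoring out $|y|^{-\eta}$ and recalling $|y|^{-2}=y^{-2}$ produces $F_1(y)=\tfrac{AF}{\eta}|y|^{-\eta}\bigl[1+\tfrac{\eta}{\eta+2}\tfrac{B}{F}y^{-2}+O(y^{-4})\bigr]$; reading off the constants gives $c_1=AF/\eta$ and $d_1=\tfrac{\eta}{\eta+2}\tfrac{B}{F}=-\tfrac{\eta^2(\eta+1)}{2(\eta+2)}+\tfrac{\eta^2 f_{t_{\eta+1}}(-\lambda_1\sqrt{\eta+1})\lambda_1\sqrt{\eta+1}}{2(\eta+2)F_{t_{\eta+1}}(-\lambda_1\sqrt{\eta+1})}$, which is precisely (\ref{asym F_1}), with $c_1$ agreeing with the leading term of eqn.(28) of Fung and Seneta (2010).

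The one point that genuinely needs care is the passage from a pointwise $O(x^{-4})$ statement about the integrand to an $O(|y|^{-(\eta+4)})$ (equivalently, relative $O(y^{-4})$) statement about $F_1(y)$, i.e.\ the legitimacy of integrating the expansion term by term. This is secured by making the two remainder estimates uniform on half-lines $(-\infty,-M]$: the binomial remainder for $(1+\eta/x^2)^{-(\eta+1)/2}$ admits an explicit Lagrange-type bound of order $x^{-4}$ there, while the Taylor remainder for $F_{t_{\eta+1}}$ is at most $\tfrac12\sup|f_{t_{\eta+1}}'|$ times the square of the $O(x^{-2})$ perturbation of its argument; hence the bracketed remainder is $O(x^{-4})$ uniformly, the full remainder in $f_{X_1}$ is a uniform $O(|x|^{-(\eta+5)})$, which is integrable near $-\infty$ with tail integral $O(|y|^{-(\eta+4)})$. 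Everything else is bookkeeping of constants, and the analogous statement for $F_2$ follows by interchanging $\lambda_1$ and $\lambda_2$.
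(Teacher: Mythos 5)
Your proposal is correct: the expansion of the skew factor, the binomial expansion of the symmetric-$t$ density factor, the product form $A|x|^{-(\eta+1)}\bigl[F+Bx^{-2}+O(x^{-4})\bigr]$, and the termwise integration all check out, and your constants $c_1=AF/\eta$ and $d_1=\tfrac{\eta}{\eta+2}\tfrac{B}{F}$ coincide with those in the theorem. The overall strategy is the same as the paper's: the paper also Taylor-expands $F_{t_{\eta+1}}\bigl(\lambda_1 x\sqrt{(\eta+1)/(\eta+x^2)}\bigr)$ about $-\lambda_1\sqrt{\eta+1}$ (via a second-order mean-value form with $F''_{t_{\eta+1}}$ bounded, which is exactly your quadratic-remainder bound) and then splits $F_1(y)$ into the two corresponding integrals. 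The one genuine difference is how the symmetric-$t$ tail integrals are expanded: the paper keeps $2f_{t_{\eta}}(x)$ intact and obtains the second-order terms from the integration-by-parts expansion of the $t$ tail area following Soms (1976), whereas you expand $(1+\eta/x^2)^{-(\eta+1)/2}$ binomially inside the integrand and then integrate exact power functions, justifying the interchange by uniform remainder bounds on half-lines $(-\infty,-M]$. The two routes produce identical constants; yours is somewhat more self-contained (no appeal to Soms) and makes explicit the uniformity needed to integrate the $O(x^{-4})$ terms, a point the paper leaves implicit, while the paper's route sidesteps termwise integration of an asymptotic series by working with the exact density under the integral sign.
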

\textbf{\emph{Proof.}}
%Without loss of generality, set $i=1$.
%Recall that the marginal density of $X_1$ can be found as
%\begin{equation*}
%f_{X_1}(x) = \frac{2\Gamma(\frac{\eta+1}{2})}{(\pi\eta)^{\inv{2}}\Gamma(\frac{\eta}{2})}(1+\frac{x^2}{\eta})^{-(\frac{\eta+1}{2})} F_{t_{\eta+1}}(\lambda_1 x\sqrt{\frac{\eta+1}{\eta+x^2}}).
%\end{equation*}
For $x<0$ and by using a second-order Mean Value Theorem,
%on (\ref{density:azzalini skew t2})
we have {\adb
\begin{align}
\notag & F_{t_{\eta+1}}(\lambda_1x\sqrt{\frac{\eta+1}{\eta+x^2}}) = F_{t_{\eta+1}}(-\lambda_1\sqrt{\eta+1}(1+\frac{\eta}{x^2})^{-\inv{2}})\\
\notag =& F_{t_{\eta+1}}(-\lambda_1\sqrt{\eta+1}) + f_{t_{\eta+1}}(-\lambda_1\sqrt{\eta+1})\lambda_1\sqrt{\eta+1}[1-(1+\frac{\eta}{x^2})^{-\inv{2}}]\\
\notag &\quad + (\lambda_1\sqrt{\eta+1})^2[1-(1+\frac{\eta}{x^2})^{-\inv{2}}]^2F''_{t_{\eta+1}}(\delta_1(x))/2, 
\end{align}}
for some $\delta_1(x)$ contained in the interval $(\min(-\lambda_1\sqrt{\eta+1}(1+\frac{\eta}{x^2})^{-\inv{2}}, -\lambda_1\sqrt{\eta+1})$, $\max(-\lambda_1\sqrt{\eta+1}(1+\frac{\eta}{x^2})^{-\inv{2}}, -\lambda_1\sqrt{\eta+1}))$;
\begin{align*}
&= F_{t_{\eta+1}}(-\lambda_1\sqrt{\eta+1}) + f_{t_{\eta+1}}(-\lambda_1\sqrt{\eta+1})\lambda_1\sqrt{\eta+1}[\frac{\eta}{2x^2} + O(\inv{x^4})]\\
& \quad +(\lambda_1\sqrt{\eta+1})^2[1-(1+\frac{\eta}{x^2})^{-\inv{2}}]^2F''_{t_{\eta+1}}(\delta_1(x))/2.\\
%&=& F_{t_{\eta+1}}(-\lambda_1\sqrt{\eta+1}) + f_{t_{\eta+1}}(b)\lambda_1\sqrt{\eta+1}\frac{\eta}{2x^2}\left(1 + O(\inv{x^2})\right).
\text{Since } & |F''_{t_{\eta+1}}(\delta_1(x))|  = \left| \frac{\Gamma(\frac{\eta+1}{2})}{(\pi\eta)^{\inv{2}}\Gamma(\frac{\eta}{2})}(\frac{\eta+1}{\eta})\delta_1(x)(1+\frac{\delta_1^2(x)}{\eta})^{-(\frac{\eta+3}{2})}\right| \leq k_1
\end{align*}
for some constant $k_1$ as the function is bounded for large $|x|$ and
\begin{equation*}
(1-(1+\frac{\eta}{x^2})^{-\inv{2}})^2 = O(\inv{x^4}).
\end{equation*}
Therefore the dominating term of
\begin{equation*}
(\lambda_1\sqrt{\eta+1})^2[1-(1+\frac{\eta}{x^2})^{-\inv{2}}]^2F''_{t_{\eta+1}}(\delta_1(x))/2
\end{equation*}
is in the order of $x^{-4}$ and hence,
\begin{align*}
& F_{t_{\eta+1}}(\lambda_1x\sqrt{\frac{\eta+1}{\eta+x^2}}) \\
=& F_{t_{\eta+1}}(-\lambda_1\sqrt{\eta+1}) + f_{t_{\eta+1}}(-\lambda_1\sqrt{\eta+1})\lambda_1\sqrt{\eta+1}\frac{\eta}{2x^2}\left(1 + O(\inv{x^2})\right).
\end{align*}
Then for any $y< 0$, 
\begin{align}
\notag & F_1(y) = P(X_1\leq y) = \int^{y}_{-\infty}2 f_{t_{\eta}}(x)F_{t_{\eta+1}}(\lambda_1 x\sqrt{\frac{\eta+1}{\eta+x^2}})\,dx\\
=& \int^{y}_{-\infty}2 f_{t_{\eta}}(x)F_{t_{\eta+1}}(-\lambda_1\sqrt{\eta+1})\,dx \label{two terms:1}\\
& + \int^{y}_{-\infty}f_{\eta}(x) f_{t_{\eta+1}}(-\lambda_1\sqrt{\eta+1})\lambda_1\sqrt{\eta+1}x^{-2}\left(1+O(\inv{x^2})\right)\,dx \label{two terms:2}
\end{align}
We shall consider these two terms separately. Focusing on the first term, i.e. 

\noindent (\ref{two terms:1}), we have
\begin{equation*}
\int^{y}_{-\infty}2 f_{t_{\eta}}(x)F_{t_{\eta+1}}(-\lambda_1\sqrt{\eta+1})\,dx=\int^{\infty}_{|y|}c\, x^{-(\eta+1)}(1+\frac{\eta}{x^2})^{-(\frac{\eta+1}{2})}\,dx
\end{equation*}
by setting $c= \frac{2\Gamma(\frac{\eta+1}{2})\eta^{\frac{\eta+1}{2}}}{(\pi\eta)^{\inv{2}}\Gamma(\frac{\eta}{2})}F_{t_{\eta+1}}(-\lambda_1\sqrt{\eta+1})$;
\begin{equation*}
=c\frac{|y|^{-\eta}}{\eta}\Bigl(1-(\frac{\eta+1}{2})\frac{\eta}{y^2} + O(\inv{y^4})\Bigr)+ c(\eta+1)\Bigl\{ \frac{|y|^{-(\eta+2)}}{\eta+2}\bigl(1+O(\inv{y^2})\bigr)\Bigr\}
\end{equation*}
by applying integration by parts as suggested in Soms (1976).
%since
%\begin{eqnarray*}
%&& \int^{\infty}_{|y|} x^{-(\eta+5)}(1+\frac{\eta}{x^2})^{-(\frac{\eta+5}{2})}\,dx \\
%&=& \int^{\infty}_{|y|} x^{-(\eta+5)}(1+O(\inv{x^2}))\,dx \\
%&=& \frac{|y|^{-(\eta+4)}}{\eta+4}(1+O(\inv{y^2})) = O(|y|^{-(\eta+4)}),
%\end{eqnarray*}
%i.e. the integral will only generate terms in the order of $|y|^{-(\eta+4)}$ or higher.
Thus, {\adb
\begin{align}
\notag  & \int^{y}_{-\infty}2f_{t_{\eta}}(x)F_{t_{\eta+1}}(-\lambda_1\sqrt{\eta+1}) \,dx\\
& = \frac{2\Gamma(\frac{\eta+1}{2})\eta^{\frac{\eta+1}{2}}}{(\pi\eta)^{\inv{2}}\Gamma(\frac{\eta}{2})}F_{t_{\eta+1}}(-\lambda_1\sqrt{\eta+1})\frac{|y|^{-\eta}}{\eta}\bigl(1+ \frac{\frac{\eta(\eta+1)}{\eta+2}- \frac{\eta(\eta+1)}{2}}{y^2}+ O(\inv{y^4})\bigr). \label{two terms:1 end}
\end{align}}
The second term, i.e. (\ref{two terms:2}), can be treated similarly to get
\begin{align}
\notag &  \int^{y}_{-\infty}f_{t_{\eta}}(x) f_{t_{\eta+1}}(-\lambda_1\sqrt{\eta+1})\lambda_1\sqrt{\eta+1}x^{-2}\left(1+O(\inv{x^2})\right)\,dx\\
=& \frac{\Gamma(\frac{\eta+1}{2})\eta^{\frac{\eta+3}{2}}}{(\pi\eta)^{\inv{2}}\Gamma(\frac{\eta}{2})}f_{t_{\eta+1}}(-\lambda_1\sqrt{\eta+1})\lambda_1\sqrt{\eta+1}\frac{|y|^{-(\eta+2)}}{\eta+2}\left(1+O(\inv{y^2})\right), \label{two terms:2 end}
\end{align}
Hence, by combining (\ref{two terms:1 end}) and (\ref{two terms:2 end}) the result follows. \hfill $\Box$
%Recall that the marginal density of $X_1$ can be found as

%, we have {\adb
%\begin{equation*}
%F_1(y) = P(X_1\leq y) = c_1|y|^{-\eta}(1+d_1y^{-2}+O(y^{-4})),
%%\notag &=& \frac{2\Gamma(\frac{\eta+1}{2})\eta^{\frac{\eta+1}{2}}}{(\pi\eta)^{\inv{2}}\Gamma(\frac{\eta}{2})}F_{t_{\eta+1}}(-\lambda_1\sqrt{\eta+1})\frac{|y|^{-\eta}}{\eta}\\
%%\notag && \quad \times \left(1+ (\frac{\eta(\eta+1)}{\eta+2}-\frac{\eta(\eta+1)}{2}+ \frac{\eta^2 f_{t_{\eta+1}}(-\lambda_1\sqrt{\eta+1})\lambda_1\sqrt{\eta+1}}{2(\eta+2)F_{t_{\eta+1}}(-\lambda_1\sqrt{\eta+1})})\inv{y^2} +O(\inv{y^4})\right)\\
%%\notag &=& \frac{2\Gamma(\frac{\eta+1}{2})\eta^{\frac{\eta+1}{2}}}{(\pi\eta)^{\inv{2}}\Gamma(\frac{\eta}{2})}F_{t_{\eta+1}}(-\lambda_1\sqrt{\eta+1})\frac{|y|^{-\eta}}{\eta}\\
%%\notag && \quad \times \left(1+ ( - \frac{\eta^2(\eta+1)}{2(\eta+2)}+ \frac{\eta^2 f_{t_{\eta+1}}(-\lambda_1\sqrt{\eta+1})\lambda_1\sqrt{\eta+1}}{2(\eta+2)F_{t_{\eta+1}}(-\lambda_1\sqrt{\eta+1})})\inv{y^2} +O(\inv{y^4})\right) \\
%\end{equation*}
%as $y\to -\infty$, where
%\begin{eqnarray*}
%c_1 &=& \frac{2\Gamma(\frac{\eta+1}{2})\eta^{\frac{\eta+1}{2}}}{(\pi\eta)^{\inv{2}}\Gamma(\frac{\eta}{2})}\frac{F_{t_{\eta+1}}(-\lambda_1\sqrt{\eta+1})}{\eta}\\
%\text{and } \quad d_1 &=&  - \frac{\eta^2(\eta+1)}{2(\eta+2)}+ \frac{\eta^2 f_{t_{\eta+1}}(-\lambda_1\sqrt{\eta+1})\lambda_1\sqrt{\eta+1}}{2(\eta+2)F_{t_{\eta+1}}(-\lambda_1\sqrt{\eta+1})}.
%\end{eqnarray*}}

\begin{thm}The inverse of $P(X_1 \leq y)$, $F_1^{-1}(u)$, satisfies:
\begin{equation}
F_1^{-1}(u)  = - c_1^{\inv{\eta}}u^{-\inv{\eta}}(1+\frac{d_1}{c_1^{\frac{2}{\eta}}}\frac{u^{\frac{2}{\eta}}}{\eta}+O(u^{\frac{4}{\eta}})), \quad \text{as $u\to 0^+$}. \label{asym F_1^-1}
\end{equation}
\end{thm}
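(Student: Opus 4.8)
} The plan is to invert the expansion (\ref{asym F_1}) directly, by substituting an ansatz of the same shape and matching successive powers of $u^{2/\eta}$. Since the marginal density of $X_1$ is continuous and strictly positive, $F_1$ is strictly increasing and continuous on a neighbourhood of $-\infty$, so $F_1^{-1}$ is well defined for all sufficiently small $u>0$ and $F_1^{-1}(u)\to-\infty$ as $u\to0^+$. Writing $y=F_1^{-1}(u)$ and $|y|=-y$, relation (\ref{asym F_1}) gives at leading order $u\sim c_1|y|^{-\eta}$, hence $|y|\sim c_1^{\inv{\eta}}u^{-\inv{\eta}}$ and in particular $F_1^{-1}(u)=-c_1^{\inv{\eta}}u^{-\inv{\eta}}(1+o(1))$.

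Next I would write $|y|=c_1^{\inv{\eta}}u^{-\inv{\eta}}w(u)$ with $w(u)\to1$ and feed this into (\ref{asym F_1}). Using $|y|^{-\eta}=c_1^{-1}u\,w^{-\eta}$ and $|y|^{-2}=c_1^{-2/\eta}u^{2/\eta}w^{-2}$, the identity $u=F_1(y)$ becomes, after cancelling one factor of $u$, the scalar equation
\begin{equation*}
w(u)^{\eta}=1+d_1\,c_1^{-\frac{2}{\eta}}\,u^{\frac{2}{\eta}}\,w(u)^{-2}+O\!\left(u^{\frac{4}{\eta}}\right).
\end{equation*}
A two-step bootstrap then finishes things. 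First, $w\to1$ gives $w^{-2}=1+o(1)$, hence $w^{\eta}=1+O(u^{2/\eta})$ and so $w=1+O(u^{2/\eta})$. Feeding this back in yields $w^{-2}=1+O(u^{2/\eta})$, whence $w^{\eta}=1+d_1c_1^{-2/\eta}u^{2/\eta}+O(u^{4/\eta})$; a binomial expansion of the $\eta$-th root gives $w=1+\frac{1}{\eta}d_1c_1^{-2/\eta}u^{2/\eta}+O(u^{4/\eta})$. Multiplying by $-c_1^{\inv{\eta}}u^{-\inv{\eta}}$ reproduces exactly (\ref{asym F_1^-1}).

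The step I expect to be the main obstacle is the control of the remainders through this inversion: one must check that the $O(|y|^{-4})$ term of (\ref{asym F_1}), once re-expressed via $|y|=c_1^{\inv{\eta}}u^{-\inv{\eta}}(1+O(u^{2/\eta}))$, is genuinely $O(u^{4/\eta})$, and that neither cancelling the leading factor $u$ nor extracting the $\eta$-th root inflates this order. This is precisely where the strict monotonicity of $F_1$ near $-\infty$ and the explicit (rather than merely $o$-type) error bound supplied by Theorem 1 enter; alternatively one could simply invoke a general inversion lemma for asymptotic expansions to legitimise the formal matching in one stroke. Everything else reduces to routine Taylor and binomial expansions.
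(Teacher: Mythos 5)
Your proposal is correct and is essentially the paper's own argument: both establish the leading-order relation $|F_1^{-1}(u)|\sim(c_1/u)^{1/\eta}$ first and then refine it by a recursive (bootstrap) substitution of the expansion from Theorem 1 into itself, which is exactly what the paper does via the identity $y=G(G^{\leftarrow}(y))$ with $G=1/F_1(-\cdot)$ and the verification that the slowly varying factor tends to $1$. Your direct ansatz $|y|=c_1^{1/\eta}u^{-1/\eta}w(u)$ merely repackages the paper's $S^{*}$, and your remainder control (boundedness of $w^{-2},w^{-4}$ near $1$ and the Lipschitz $\eta$-th root) closes the only step you flagged as delicate.
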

\begin{proof}
On account of (\ref{asym F_1}), to find the inverse of $F_1(\cdot)$ i.e. $F_1^{-1}(\cdot)$, it is sufficient to consider the function $H(y)=F_1(-y),\,  y >0,$ so that, by Theorem 1
\begin{equation*}
H(y) =  c_1y^{-\eta}(1+d_1y^{-2}+O(y^{-4})).
\end{equation*}
where $\eta>0$ and $c_1$, $d_1\ne 0$, as $y\to \infty$. Now define $G(y) = 1/H(y)$  so that 
\begin{equation}
G(y) = c_1^{-1}y^{\eta}(1-d_1y^{-2}+O(y^{-4})) =c_1^{-1}y^{\eta}S(y)  \label{svf1}
\end{equation}which defines $S(y)$, and  we note $S(y) \to 1,$ as   $y \to \infty.$
Noting that $G(y)$ is strictly increasing and continuous, denote its inverse by $G^{\leftarrow}(y)$. (We shall use this notation for inverses, to avoid confusion, only in  this proof.)  Then:
\begin{equation*}
 y = G(G^{\leftarrow}(y)) = c^{-1}_1(G^{\leftarrow}(y))^{\eta}\Bigl(1-d_1(G^{\leftarrow}(y))^{-2}+O\bigl((G^{\leftarrow}(y))^{-4}\bigr)\Bigr) \end{equation*}
so that  {\adb
\begin{eqnarray}
G^{\leftarrow}(y)&=&\Bigg{\{} \frac{c_1y}{\Bigl(1-d_1(G^{\leftarrow}(y))^{-2}+O\bigl((G^{\leftarrow}(y))^{-4}\bigr)\Bigr)}\Bigg{\}}^{\frac{1}{\eta}} \nonumber \\
 &=& c_1^{\inv{\eta}}y^{\inv{\eta}}\Bigl(1+\frac{d_1}{\eta}(G^{\leftarrow}(y))^{-2}+O\bigl((G^{\leftarrow}(y))^{-4}\Bigr)\Bigr) \label{svf3}\\
 &=& (c_1y)^{\frac{1}{\eta}}S^{*}(y), \label{svf2}
\end{eqnarray}}
which defines $S^*(y)$. 
 %&=& c_1^{\inv{\eta}}y^{\inv{\eta}}\Bigl(1+\frac{d_1}{c_1^{\frac{2}{\eta}}\eta}y^{-\frac{2}{\eta}}+O(y^{-\frac{4}%{\eta}})\Bigr)
% \end{eqnarray}
%Now define $S^{*}(y)$ by 
%\begin{equation}
%G^{\leftarrow}(y)=.  
%\end{equation} 
Then using (\ref{svf2}) and (\ref{svf1})
\begin{equation*}
 y = G^{\leftarrow}(G(y)) =((G(y)c_1)^{\inv{\eta}}S^{*}(G(y)) =(c_1^{-1}y^{\eta}S(y)c_1)^{\inv{\eta}}S^{*}(G(y)) \end{equation*} so that  $S^{*}(G(y))= S^{- \inv{\eta}}(y)$, whence, since $S(y) \to 1$ as $y \to \infty$ 
\begin{equation} 
 \lim_{y \to \infty} S^{*}(y) =1.  \label{svf6}
\end{equation}
Hence, substituting expression (\ref{svf3}) for $G^{\leftarrow}(y)$ into the right hand side of  (\ref{svf3}) (recursively), and using (\ref{svf6}), as $y \to \infty$, 
\begin{equation*}
 G^{\leftarrow}(y) = c_1^{\inv{\eta}}y^{\inv{\eta}}\Bigl(1+\frac{d_1}{c_1^{\frac{2}{\eta}}\eta}y^{-\frac{2}{\eta}}+O(y^{-\frac{4}{\eta}})\Bigr).
 \end{equation*} The final result follows as $H(y) = 1/G(y)$ implies that $H^{\leftarrow}(y) = G^{\leftarrow}(1/y)$. \end{proof}
 The representations (\ref{svf1}) and (\ref{svf2}) are those for a regularly varying function with index $\eta$,  and  its inverse $G^{\leftarrow}(\cdot)$, regularly  varying with index $1/\eta$. (See Proposition 0.8 on p. 22 of Resnick (1987)).
However, the specialized form of the slowly varying  function $S(y)$ needs to be invoked in our self-contained proof.

Similarly, the inverse of $P(X_2\leq y)$, i.e. $F_2^{-1}(u),$ is thus
\begin{equation}
F_2^{-1}(u) = - c_2^{\inv{\eta}}u^{-\inv{\eta}}(1+\frac{d_2}{c_2^{\frac{2}{\eta}}}\frac{u^{\frac{2}{\eta}}}{\eta}+O(u^{\frac{4}{\eta}})), \quad \text{as $u\to 0^+$},\label{asym F_2^-1}
\end{equation}
where {\adb
\begin{align*}
c_2 =& \frac{2\Gamma(\frac{\eta+1}{2})\eta^{\frac{\eta+1}{2}}}{(\pi\eta)^{\inv{2}}\Gamma(\frac{\eta}{2})}\frac{F_{t_{\eta+1}}(-\lambda_2\sqrt{\eta+1})}{\eta}\\
\text{and }\quad d_2 =&  - \frac{\eta^2(\eta+1)}{2(\eta+2)}+ \frac{\eta^2 f_{t_{\eta+1}}(-\lambda_2\sqrt{\eta+1})\lambda_2\sqrt{\eta+1}}{2(\eta+2)F_{t_{\eta+1}}(-\lambda_2\sqrt{\eta+1})}.
\end{align*}}
A result which we  shall need  repeatedly in the sequel is that
\begin{equation}
c(y)\stackrel{def}{=} F_2^{-1}(F_1(y)) = \left(\frac{F_{t_{\eta+1}}(-\lambda_2\sqrt{\eta+1})}{F_{t_{\eta+1}}(-\lambda_1\sqrt{\eta+1})}\right)^{\inv{\eta}} y\bigl(1-\frac{d_1-d_2(\frac{c_1}{c_2})^{\frac{2}{\eta}}}{\eta\,y^2}+ O(y^{-4})\bigr)\label{asym F_2F_1}
\end{equation}
as $y \to -\infty ,$ which follows after some algebra by combining (\ref{asym F_1}) and (\ref{asym F_2^-1}).

Notice that when $\lambda_1 = \lambda_2 =\lambda$, then the first order  term in (\ref{asym F_2F_1}) vanishes as {\adb
\begin{align*}
d_1 - d_2(\frac{c_1}{c_2})^{\frac{2}{\eta}}=&  - \frac{\eta^2(\eta+1)}{2(\eta+2)}+ \frac{\eta^2 f_{t_{\eta+1}}(-\lambda_1\sqrt{\eta+1})\lambda_1\sqrt{\eta+1}}{2(\eta+2)F_{t_{\eta+1}}(-\lambda_1\sqrt{\eta+1})}-(\frac{F_{t_{\eta+1}}(-\lambda_1\sqrt{\eta+1})}{F_{t_{\eta+1}}(-\lambda_2\sqrt{\eta+1})})^{\frac{2}{\eta}}\\
& \quad \times (- \frac{\eta^2(\eta+1)}{2(\eta+2)}+ \frac{\eta^2 f_{t_{\eta+1}}(-\lambda_2\sqrt{\eta+1})\lambda_2\sqrt{\eta+1}}{2(\eta+2)F_{t_{\eta+1}}(-\lambda_2\sqrt{\eta+1})})\\
=&
 - \frac{\eta^2(\eta+1)}{2(\eta+2)}+ \frac{\eta^2 f_{t_{\eta+1}}(-\lambda\sqrt{\eta+1})\lambda\sqrt{\eta+1}}{2(\eta+2)F_{t_{\eta+1}}(-\lambda\sqrt{\eta+1})}\\
& \quad - (- \frac{\eta^2(\eta+1)}{2(\eta+2)}+ \frac{\eta^2 f_{t_{\eta+1}}(-\lambda\sqrt{\eta+1})\lambda\sqrt{\eta+1}}{2(\eta+2)F_{t_{\eta+1}}(-\lambda\sqrt{\eta+1})}) = 0,
\end{align*}}
so that  $\lambda_1 = \lambda_2  \Rightarrow c(y) = F_2^{-1}(F_1(y)) = y(1+O(y^{-3})).$
Finally, one can show that:
$
\lambda_1 = \lambda_2  \Leftrightarrow \theta_1=\theta_2.$
This case of ``equiskewness" in particular covers the  symmetric case $\theta_1 = \theta_2 = 0.$

\section{Main result}

%Follows from Fung and Seneta (2010), we have
%\begin{eqnarray}
%\notag && P(X_2\leq F_2^{-1}(F_1(y))|X_1 = y) \\
%\notag &=& \int^{\frac{F_2^{-1}(F_1(y))-\rho y}{\left(\frac{(\eta+y^2)(1-\rho^2)}{\eta+1}\right)^{\inv{2}}}}_{-\infty} f_{t_{\eta+1}}(z,0,1)\\
%&& \quad \times \left\{\frac{F_{t_{\eta+2}} \left(
%\Biggl(\theta_2\bigl(\frac{(\eta+y^2)(1-\rho^2)}{\eta+1}\bigr)^{\inv{2}}z+(\theta_1+\rho\theta_2)
%y \Biggr)
%\sqrt{\frac{\eta+2}{\eta+y^2+\frac{(\eta+y^2)z^2}{\eta+1}}}\right)}{F_{t_{\eta+1}}(\lambda_1
%y\sqrt{\frac{\eta+1}{\eta+y^2}})}\right\}\,dz,
%\end{eqnarray}
\begin{thm} For the bivariate  skew-t distribution:
\begin{equation*}
|\lambda_L(u) - \lambda_L |  = u^{\frac{2}{\eta}}L(u), 
\end{equation*}
where $L(u) \to k,$ where $k$ is a constant  as $u\to 0^+$.  
\end{thm}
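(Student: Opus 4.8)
The plan is to turn the claim into a \emph{second-order refinement} of the limiting identity (\ref{temp1})--(\ref{second half:azzalini skew t}) and then average. Write $C_i$ for the partial derivative of the copula $C$ with respect to its $i$-th argument. Since $\frac{d}{dv}C(v,v)=C_1(v,v)+C_2(v,v)$ and $C(0,0)=0$, integrating from $0$ to $u$ gives the exact finite-$u$ identity
\[
\lambda_L(u)=\frac{C(u,u)}{u}=\frac1u\int_0^u\bigl[C_1(v,v)+C_2(v,v)\bigr]\,dv .
\]
Using the standard fact $C_1(u_1,u_2)=P(U_2\le u_2\mid U_1=u_1)$ with $U_i=F_i(X_i)$, the integrand equals $P(X_2\le F_2^{-1}(F_1(y))\mid X_1=y)+P(X_1\le F_1^{-1}(F_2(y'))\mid X_2=y')$ with $y=F_1^{-1}(v)$, $y'=F_2^{-1}(v)$, i.e.\ exactly the bracket in (\ref{temp1}) whose limit, by Fung and Seneta (2010), is $\lambda_L$. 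Hence it suffices to prove
\[
C_1(v,v)+C_2(v,v)=\lambda_L+\beta\,v^{2/\eta}+o(v^{2/\eta}),\qquad v\to0^+,
\]
for some constant $\beta$; then $\lambda_L(u)-\lambda_L=\frac1u\int_0^u(\beta v^{2/\eta}+o(v^{2/\eta}))\,dv=\frac{\eta\beta}{\eta+2}u^{2/\eta}(1+o(1))$, which is the assertion with $k=\eta|\beta|/(\eta+2)$.

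I would obtain the expansion of $C_1(v,v)=P(X_2\le c(y)\mid X_1=y)$, with $y=F_1^{-1}(v)\to-\infty$ and $c(y)=F_2^{-1}(F_1(y))$ (the term $C_2$ being symmetric), by redoing the computation behind (\ref{first half:azzalini skew t}) while retaining the next term. Starting from $f_{X_2\mid X_1}(x_2\mid y)=f_{\mathbf X}(y,x_2)/f_{X_1}(y)$ with the densities of Section 2 and substituting $x_2=\rho y+\sqrt{1-\rho^2}\sqrt{(\eta+y^2)/(\eta+1)}\,z$, the quadratic form factorises, $1+\mathbf x^TR^{-1}\mathbf x/\eta=\frac{\eta+y^2}{\eta}\bigl(1+z^2/(\eta+1)\bigr)$, and after cancellation
\[
C_1(v,v)=\int_{-\infty}^{b(y)}f_{t_{\eta+1}}(z)\,\frac{F_{t_{\eta+2}}\bigl(\Psi(y,z)\bigr)}{F_{t_{\eta+1}}\bigl(\lambda_1 y\sqrt{(\eta+1)/(\eta+y^2)}\bigr)}\,dz ,
\]
where $\Psi(y,z)=\bigl[(\theta_1+\rho\theta_2)\tfrac{y}{\sqrt{\eta+y^2}}+\theta_2\sqrt{\tfrac{1-\rho^2}{\eta+1}}\,z\bigr]\sqrt{\tfrac{\eta+2}{1+z^2/(\eta+1)}}$ and $b(y)=\bigl(c(y)-\rho y\bigr)\big/\bigl(\sqrt{1-\rho^2}\sqrt{(\eta+y^2)/(\eta+1)}\bigr)$. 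I would then Taylor-expand in $y^{-2}$: the denominator via $\tfrac{y}{\sqrt{\eta+y^2}}=-1+\tfrac{\eta}{2y^2}+O(y^{-4})$; the numerator $F_{t_{\eta+2}}(\Psi(y,z))$ by the mean value theorem in that variable; and the moving upper limit via $b(y)=-a_{2.1}+b_1y^{-2}+O(y^{-4})$, reading $b_1$ off from the second-order expansion (\ref{asym F_2F_1}) of $c(y)$ together with that of $\sqrt{(\eta+y^2)/(\eta+1)}$. Collecting the three $O(y^{-2})$ contributions (from the denominator, from differentiating the integrand, and the boundary term from the moving limit $b(y)$) yields $C_1(v,v)=\ell_1+\gamma_1y^{-2}+O(y^{-4})$, where $\ell_1$ is the right-hand side of (\ref{first half:azzalini skew t}). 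Finally $y^{-2}$ is rewritten using $v=F_1(y)=c_1|y|^{-\eta}(1+O(y^{-2}))$ from Theorem 1, so $y^{-2}=c_1^{-2/\eta}v^{2/\eta}(1+o(1))$; the analogous step for $C_2$ uses $c_2$, giving $\beta=\gamma_1c_1^{-2/\eta}+\gamma_2c_2^{-2/\eta}$ and $\ell_1+\ell_2=\lambda_L$.

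The main obstacle is not the bookkeeping of coefficients but the analytic point that this formal $y^{-2}$-expansion is valid \emph{uniformly under the integral sign}: one needs dominating bounds, integrable in $z$ and uniform for all large $|y|$, for the remainders of the above Taylor expansions before integrating over $z\in(-\infty,b(y)]$, so that the aggregate error is genuinely $O(y^{-4})$ (or at least $o(y^{-2})$). This rests on the polynomial decay of $f_{t_{\eta+1}}(z)$ together with the boundedness of $F_{t_{\eta+2}}$ and $f_{t_{\eta+2}}$ and the controlled growth in $z$ of $\Psi(y,z)$ and its $y$-derivative; the required estimates are of the same type as the bound on $F''_{t_{\eta+1}}$ used in the proof of Theorem 1. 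A secondary point deserving care is that no $O(y^{-1})$ or $O(y^{-3})$ term appears — the quantities $c(y)$, $\tfrac{y}{\sqrt{\eta+y^2}}$ and $\sqrt{(\eta+y^2)/(\eta+1)}$ each expand in integer powers of $y^{-2}$ — so that the leading correction is exactly of order $v^{2/\eta}$, which also exhibits the regime in which $k\neq0$.
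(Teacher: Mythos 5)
Your proposal is correct and follows essentially the same route as the paper: differentiating the copula diagonal into the two conditional probabilities, expanding each in powers of $y^{-2}$ with the three contributions you list (the moving integration limit handled via the second-order expansion (\ref{asym F_2F_1}), hence Theorems 1--2), converting $y^{-2}$ to $u^{2/\eta}$ through the quantile asymptotics, and recovering $\lambda_L(u)-\lambda_L$ by averaging the derivative over $(0,u)$, which is exactly the paper's Karamata step. The only differences are cosmetic: you rederive the Fung--Seneta (2010) integral representation rather than citing it, and you make explicit the domination argument needed to expand under the integral sign, which the paper leaves implicit.
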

\begin{proof}
From (\ref{defn:tail dependence 3}) and using some basic properties of copulas, we have {\adb
\begin{align}
\notag & \frac{dC(u,u)}{d\,u} - \lambda_L \\
&= \{P(X_2\leq F_2^{-1}(u)|X_1 = F_1^{-1}(u))  - \lim_{u\to 0^+} P(X_2\leq F_2^{-1}(u)|X_1 = F_1^{-1}(u))\} \label{origianl expression 1}\\
& \quad +\{P(X_1\leq F_1^{-1}(u)|X_2 = F_2^{-1}(u))  - \lim_{u\to 0^+} P(X_1\leq F_1^{-1}(u)|X_2 = F_2^{-1}(u))\} \label{original expression 2}
\end{align}}
which allows for the distribution being skew. %We first simplify the notation slightly by rewriting () as
Without loss of generality, we focus on (\ref{origianl expression 1}). Applying a change of variable of $y =  F_1^{-1}(u),$ so that $y\to -\infty$ as $u\to 0^+$, (\ref{origianl expression 1}) becomes
\begin{equation*}
P(X_2\leq c(y)|X_1 =y)  - \lim_{y\to -\infty} P(X_2\leq c(y)|X_1 = y).
\end{equation*}
Once again from Fung and Seneta (2010), these  two terms can be expressed respectively as
{\adb
\begin{align}
\notag & P(X_2\leq c(y)|X_1 = y) = \int^{L_1(y)}_{-\infty} f_{t_{\eta+1}}(z) \tau(z,y)\,dz, \\
=& \int^{L_1(y)}_{L_1}f_{t_{\eta+1}}(z)\tau(z,y)\,dz + \int^{L_1}_{-\infty}f_{t_{\eta+1}}(z)\tau(z,y)\,dz,
\label{before first half:azzalini skew t}
\end{align}}
and {\adb
\begin{align}
\notag   \lim_{y\rightarrow -\infty} P(X_2\leq c(y)|X_1= y) =& \int^{L_1}_{-\infty} f_{t_{\eta+1}}(z)\left\{\frac{F_{t_{\eta+2}}
\left( a(z)+b(z)\right)}{F_{t_{\eta+1}}(-\lambda_1
\sqrt{\eta+1})}\right\}\,dz \\
\notag =& \int^{L_1}_{-\infty}f_{t_{\eta+1}}(z)\tau(z,*)\,dz 
%\label{after first half:azzalini skew t}
\end{align}}
where
\begin{align*}
& L_1(y) = \frac{c(y)-\rho y}{\left(\frac{(\eta+y^2)(1-\rho^2)}{\eta+1}\right)^{\inv{2}}},  \quad c(y) = F_2^{-1}(F_1(y)),\quad a(z) = \theta_2\sqrt{\frac{(1-\rho^2)}{\eta+1}}\sqrt{\frac{\eta+2}{1+\frac{z^2}{\eta+1}}}z,\\
& b(z) = -(\theta_1+\rho\theta_2) \sqrt{\frac{\eta+2}{1+\frac{z^2}{\eta+1}}}, \quad \tau(z,y) = \frac{F_{t_{\eta+2}}(a(z)+b(z)(1+\frac{\eta}{y^2})^{-\inv{2}})}{F_{t_{\eta+1}}(-\lambda_1\sqrt{\eta+1}(1+\frac{\eta}{y^2})^{-\inv{2}})},\\
& \tau(z,*) = \frac{F_{t_{\eta+2}}
\left( a(z)+b(z)\right)}{F_{t_{\eta+1}}(-\lambda_1\sqrt{\eta+1})} = \lim_{y\to -\infty}\tau(z,y), \text{ and $f_{t_{\eta+1}}(z)$ is defined by (\ref{symmetric t density}).}
\end{align*}
%and
%\begin{equation*}
%f_{t_{\eta+1}}(z)  =
%\frac{\Gamma(\frac{\eta+2}{2})(1+\frac{z^2}{\eta+1})^{-\frac{\eta+2}{2}}}{\sqrt{\pi(\eta+1)}\Gamma(\frac{\eta+1}{2})}.
%\end{equation*}
Lastly,
\begin{align*}
L_1= \lim_{y\to -\infty}L_1(y) = -\left\{\left(\frac{F_{t_{\eta+1}}(-\lambda_2\sqrt{\eta+1})}{F_{t_{\eta+1}}(-\lambda_1\sqrt{\eta+1})}\right)^{\inv{\eta}}-\rho\right\}\sqrt{\frac{\eta+1}{1-\rho^2}},
\end{align*}
by using (\ref{asym F_2F_1}). Notice that we made no assumption that $L_1(y) > L_1$ and the integral in (\ref{before first half:azzalini skew t}) is still valid if $L_1(y) \leq L_1$ as $\int^{L_1(y)}_{L_1}$ is equivalent to $-\int^{L_1}_{L_1(y)}$. Thus, 
{\adb
\begin{align}
\notag & P(X_2 \leq F_2^{-1}(F_1(y))| X_1= y) - \lim_{y\to -\infty}P(X_2 \leq F_2^{-1}(F_1(y))| X_1= y) \\
\notag =& \int^{L_1(y)}_{-\infty} f_{t_{\eta+1}}(z)\tau(z,y)\,dz - \int^{L_1}_{-\infty}f_{t_{\eta+1}}(z)\tau(z,*)\,dz \\
 =& \int^{L_1}_{-\infty}f_{t_{\eta+1}}(z)\{\tau(z,y)-\tau(z,*)\}\,dz + \int^{L_1(y)}_{L_1}f_{t_{\eta+1}}(z)\tau(z,y)\,dz. \label{II}
\end{align}}
Treating  these two summands separately, after some  algebra,{\adb
\begin{align}
\notag & \int^{L_1}_{-\infty}f_{t_{\eta+1}}(z)\{\tau(z,y)-\tau(z,*)\}\,dz \\
\notag \sim&  -\frac{\eta}{2}y^{-2}\int^{L_1}_{-\infty}f_{t_{\eta+1}}(z)\Biggl\{ \frac{f_{t_{\eta+1}}(-\lambda_1\sqrt{\eta+1})F_{t_{\eta+2}}(a(z)+b(z))}{F_{t_{\eta+1}}(-\lambda_1\sqrt{\eta+1})}\lambda_1\sqrt{\eta+1}\\
& \quad + f_{t_{\eta+2}}(a(z)+b(z))b(z)\Biggr\}/F_{t_{\eta+1}}(-\lambda_1\sqrt{\eta+1})\,dz, \quad \text{as $y\to -\infty$.} \label{I final}
\end{align}}
Next, considering the second term of (\ref{II}), by the mean value theorem,
\begin{align}
\notag & \int^{L_1(y)}_{L_1} f_{t_{\eta+1}}(z)\tau(z,y)\,dz = (L_1(y) - L_1)f_{t_{\eta+1}}(\xi_{y})\tau(\xi_{y},y) \\
\notag =& \frac{y^{-2}}{(\frac{1-\rho^2}{\eta+1})^{\inv{2}}}\left\{ \left(\frac{F_{t_{\eta+1}}(-\lambda_2\sqrt{\eta+1})}{F_{t_{\eta+1}}(-\lambda_1\sqrt{\eta+1})}\right)^{\inv{\eta}}\left(\frac{d_1-d_2(\frac{c_1}{c_2})^{\frac{2}{\eta}}}{\eta}\right)+ \frac{\eta}{2}(\left(\frac{F_{t_{\eta+1}}(-\lambda_2\sqrt{\eta+1})}{F_{t_{\eta+1}}(-\lambda_1\sqrt{\eta+1})}\right)^{\inv{\eta}}-\rho)\right\}\\
\notag & \quad \times f_{t_{\eta+1}}(\xi_{y})\tau(\xi_{y},y) (1+O(\inv{y}))\\
\notag \sim& \frac{y^{-2}}{(\frac{1-\rho^2}{\eta+1})^{\inv{2}}}\left\{ \left(\frac{F_{t_{\eta+1}}(-\lambda_2\sqrt{\eta+1})}{F_{t_{\eta+1}}(-\lambda_1\sqrt{\eta+1})}\right)^{\inv{\eta}}\left(\frac{d_1-d_2(\frac{c_1}{c_2})^{\frac{2}{\eta}}}{\eta}\right)- \frac{\eta}{2}(\left(\frac{F_{t_{\eta+1}}(-\lambda_2\sqrt{\eta+1})}{F_{t_{\eta+1}}(-\lambda_1\sqrt{\eta+1})}\right)^{\inv{\eta}}-\rho)\right\}\\
& \quad \times f_{t_{\eta+1}}(L_1)\frac{F_{t_{\eta+2}}(a(L_1)+b(L_1))}{F_{t_{\eta+1}}(-\lambda_1\sqrt{\eta+1})}(1+O(\inv{y})), \label{II final}
\end{align}
as $y\to -\infty$. Subsequently, if we combine (\ref{I final}) and (\ref{II final}), we have {\adb
\begin{align*}
&P(X_2\leq c(y)| X_1 = y) - \lim_{y\rightarrow -\infty} P(X_2 \leq c(y) |X_1  =y) \\
\sim&
y^{-2}\Biggl\{ -\int^{L_1}_{-\infty}f_{t_{\eta+1}}(z)\biggl\{ \frac{f_{t_{\eta+1}}(-\lambda_1\sqrt{\eta+1})F_{t_{\eta+2}}(a(z)+b(z))}{F_{t_{\eta+1}}(-\lambda_1\sqrt{\eta+1})}\lambda_1\sqrt{\eta+1}\frac{\eta}{2}\\
& \quad + f_{t_{\eta+2}}(a(z)+b(z))b(z)\frac{\eta}{2}\biggr\}/F_{t_{\eta+1}}(-\lambda_1\sqrt{\eta+1})\,dz + \frac{1}{(\frac{1-\rho^2}{\eta+1})^{\inv{2}}}\\
& \quad \times \biggl\{ \left(\frac{F_{t_{\eta+1}}(-\lambda_2\sqrt{\eta+1})}{F_{t_{\eta+1}}(-\lambda_1\sqrt{\eta+1})}\right)^{\inv{\eta}}\left(\frac{d_1-d_2(\frac{c_1}{c_2})^{\frac{2}{\eta}}}{\eta}\right)+ \frac{\eta}{2}(\left(\frac{F_{t_{\eta+1}}(-\lambda_2\sqrt{\eta+1})}{F_{t_{\eta+1}}(-\lambda_1\sqrt{\eta+1})}\right)^{\inv{\eta}}-\rho)\biggr\}\\
& \quad \times f_{t_{\eta+1}}(L_1)\frac{F_{t_{\eta+2}}(a(L_1)+b(L_1))}{F_{t_{\eta+1}}(-\lambda_1\sqrt{\eta+1})}\Biggr\} = k_{2.1} y^{-2}.
\end{align*}}
Apply a change of variable $u = F_1(y)$ to get {\adb
\begin{align}
\notag &P(X_2\leq F_2^{-1}(u) | X_1 = F_1^{-1}(u)) - \lim_{u \rightarrow 0^+} P(X_2 \leq F_2^{-1}(u) |X_1 =F_1^{-1}(u)) \\
\sim&
k_{2.1} (F_1^{-1}(u))^{-2} \sim k_{2.1}(-c_1^{\inv{\eta}}u^{-\inv{\eta}})^{-2} = k^*_{2.1} u^{\frac{2}{\eta}},  
 \label{first half final}
 \end{align}}
using (\ref{asym F_1^-1}), where
{\adb
 \begin{align*}
k^{*}_{2.1} =& \left(\frac{ (\pi\eta)^{\inv{2}}\Gamma(\frac{\eta}{2})\eta}{2\Gamma(\frac{\eta+1}{2})\eta^{\frac{\eta+1}{2}}F_{t_{\eta+1}}(-\lambda_1\sqrt{\eta+1})}\right)^{\frac{2}{\eta}} \Biggl\{ -\int^{L_1}_{-\infty}f_{t_{\eta+1}}(z) \\
& \quad \times \Bigl\{ \frac{f_{t_{\eta+2}}(a(z)+b(z))b(z)\frac{\eta}{2}}{F_{t_{\eta+1}}(-\lambda_1\sqrt{\eta+1})}+ \frac{f_{t_{\eta+1}}(-\lambda_1\sqrt{\eta+1})F_{t_{\eta+2}}(a(z)+b(z))}{(F_{t_{\eta+1}}(-\lambda_1\sqrt{\eta+1}))^2}\\
& \times \lambda_1\sqrt{\eta+1}\frac{\eta}{2}\Bigr\}\,dz + \frac{1}{(\frac{1-\rho^2}{\eta+1})^{\inv{2}}}\Bigl\{\left(\frac{F_{t_{\eta+1}}(-\lambda_2\sqrt{\eta+1})}{F_{t_{\eta+1}}(-\lambda_1\sqrt{\eta+1})}\right)^{\inv{\eta}}\left(\frac{d_1-d_2(\frac{c_1}{c_2})^{\frac{2}{\eta}}}{\eta}\right)\\
& \quad + \frac{\eta}{2}(\left(\frac{F_{t_{\eta+1}}(-\lambda_2\sqrt{\eta+1})}{F_{t_{\eta+1}}(-\lambda_1\sqrt{\eta+1})}\right)^{\inv{\eta}}-\rho)\Bigr\}\times f_{t_{\eta+1}}(L_1)\frac{F_{t_{\eta+2}}(a(L_1)+b(L_1))}{F_{t_{\eta+1}}(-\lambda_1\sqrt{\eta+1})}\Biggr\}.
\end{align*}}
The rate of convergence for (\ref{original expression 2}) can therefore be obtained similarly as
\begin{equation*}
P(X_1\leq F_1^{-1}(u) |X_2 = F_2^{-1}(u)) - \lim_{u\to 0^+} P(X_1\leq F_1^{-1}(u)|X_2= F_2^{-1}(u)) \sim k_{1.2}^* u^{\frac{2}{\eta}}, 
%\label{second half final}
\end{equation*}
where $k^{*}_{1.2}$ is defined analogously to $k^*_{2.1}$.

Overall, by  (\ref{origianl expression 1}) and (\ref{original expression 2})
\begin{equation}
\left| \frac{C(u,u)}{u} - \lambda_L\right| = \frac{C^{*}(u,u)}{u} = \inv{u}\int_0^u {\frac{dC^{*}(x,x)}{dx}}dx= \frac{u^{\frac{2}{\eta}}L(u)}{\eta/2+1}, , \label{defn:tail dependence
13}\end{equation}
as $u \to 0^+$, using Karamata's Theorem (See Resnick (1987), p. 17 or Seneta (1976), p.87) for regular variation at $u =0$ for the final equality,  where the slowly varying function $L(u) \sim | k^*_{2.1} + k^*_{1.2}|$ as $u\to 0^+$  is asymptotically a constant. Thus  (\ref{general approach}) obtains.
\end{proof}

Notice that if $\theta_1=\theta_2=0$ (i.e. the symmetric $t$ special case), then
\begin{align*}
& P(X_2 \leq F_2^{-1}(u) | X_1 = F_1^{-1}(u)) - \lim_{u\to 0^+} P(X_2 \leq F_2^{-1}(u) | X_1 = F_1^{-1}(u))\\
\sim& f_{t_{\eta+1}}(- \sqrt{\frac{(\eta+1)(1-\rho)}{1+\rho}})\sqrt{\frac{(\eta+1)(1-\rho)}{1+\rho}}\frac{\eta}{2}
\left(\frac{\sqrt{\pi}\Gamma(\frac{\eta}{2})}{\Gamma(\frac{\eta+1}{2})\eta^{\frac{\eta}{2}-1}}\right)^{\frac{2}{\eta}}u^{\frac{2}{\eta}}
\end{align*}
as $L_1= - \sqrt{\frac{(\eta+1)(1-\rho)}{1+\rho}}$ and $a(z)  = b(z) = \lambda_1= \lambda_2=d_1 - d_2(\frac{c_1}{c_2})^{\frac{2}{\eta}}=0$. Comparing with (\ref{first half final}), we can see that  the slowly varying bits in both are asymptotically constant, and the polynomial rate is the same. This consistency further supports the proposal that (\ref{defn:azzalini skew t}) is a proper skew extension to the symmetric multivariate $t$ distribution.\\
\section*{Acknowledgement} The elegant recursive step which simplified the proof and strengthened the statement of Theorem 2 is due to an unknown referee.

\end{document}